\documentclass[11pt,reqno]{amsart}
\usepackage{fullpage}
\usepackage{mathrsfs,amssymb,graphicx,verbatim,amsmath,amsfonts}
\usepackage{paralist}
\usepackage[breaklinks,pdfstartview=FitH]{hyperref}
\usepackage{upgreek}
\usepackage{tikz}
\usepackage[mathscr]{euscript}

\hypersetup{ocgcolorlinks=true,allcolors=testc}
\hypersetup{
     colorlinks   = true,
     citecolor    = black
}
\hypersetup{linkcolor=black}
\hypersetup{urlcolor=black}

\usepackage{dutchcal}

\addtolength{\footskip}{17pt}

%\newcommand{\mnote}[1]{}
%\allowdisplaybreaks[2]

\renewcommand{\leq}{\leqslant}

\renewcommand{\gamma}{\upgamma}
\allowdisplaybreaks

\usepackage{esint}
\usepackage[autostyle]{csquotes}

\renewcommand{\pi}{\uppi}

\newcommand{\e}{\varepsilon}
\newcommand{\R}{\mathbb R}

\newtheorem{theorem}{Theorem}
\newtheorem{lemma}[theorem]{Lemma}
\newtheorem{proposition}[theorem]{Proposition}

\newtheorem{corollary}[theorem]{Corollary}

\theoremstyle{remark}
\newtheorem{remark}[theorem]{Remark}

\renewcommand{\tau}{\uptau}

\renewcommand{\xi}{\upxi}
\renewcommand{\rho}{\uprho}

\newcommand{\N}{\mathbb N}

\newcommand{\eqdef}{\stackrel{\mathrm{def}}{=}}

\renewcommand{\theta}{\uptheta}
\renewcommand{\lambda}{\uplambda}

\renewcommand{\emptyset}{\varnothing}

\renewcommand{\gamma}{\upgamma}
\renewcommand{\beta}{\upbeta}
\renewcommand{\alpha}{\upalpha}
\renewcommand{\kappa}{\upkappa}
\renewcommand{\psi}{\uppsi}
\renewcommand{\rho}{\uprho}
\renewcommand{\delta}{\updelta}
\renewcommand{\pi}{\uppi}
\renewcommand{\omega}{\upomega}
\renewcommand{\sigma}{\upsigma}

\renewcommand{\eta}{\upeta}

\renewcommand{\kappa}{\upkappa}
\renewcommand{\mu}{\upmu}
\renewcommand{\nu}{\upnu}
\renewcommand{\pi}{\uppi}
\renewcommand{\zeta}{\upzeta}

\newcommand{\mb}{\mathbb}
\newcommand*\diff{\mathop{}\!\mathrm{d}}

\newcommand{\ms}{\mathscr}
\newcommand{\msf}{\mathsf}
\newcommand{\mf}{\mathfrak}

\begin{document}

\title{On Pisier's inequality for UMD targets}

\author{Alexandros Eskenazis}
\address{Institut de Math\'ematiques de Jussieu
\\ Sorbonne Universit\'e\\ 4, Place Jussieu\\ 75252 Paris Cedex 05\\ France}
\email{alexandros.eskenazis@imj-prg.fr}
%\address{Institut de Math\'ematiques de Jussieu, Sorbonne Universit\'e, Paris, 75252, France}
%\email{alexandros.eskenazis@imj-prg.fr}

\thanks{The author was supported by a postdoctoral fellowship of the Fondation Sciences Math\'ematiques de Paris.}

\maketitle

\vspace{-0.25in}

\begin{abstract} 
We prove an extension of Pisier's inequality (1986) with a dimension independent constant for vector valued functions whose target spaces satisfy a relaxation of the UMD property.

%\medskip

%\noindent {\scshape Resum\'e.} One montre une extension de l'inégalité de Pisier (1986) avec une constante indépendante de la dimension pour les fonctions prenant des valeurs dans les espaces de Banach satisfaisant une relaxation de la propriété UMD.
\end{abstract}

\medskip

{\footnotesize
\noindent {\em 2010 Mathematics Subject Classification.} Primary: 46B07; Secondary: 46B85, 42C10, 60G46.

\noindent {\em Key words.} Pisier's inequality, Banach space valued martingales, UMD Banach spaces.}

\section{Introduction}

Let $(X,\|\cdot\|_X)$ be a Banach space. For $p\in[1,\infty)$, the vector valued $L_p$ norm of a function $f:\Omega\to X$ defined on a measure space $(\Omega,\ms{F},\mu)$ is given by $\|f\|_{L_p(\Omega,\mu;X)}^p =  \int_\Omega \|f(\omega)\|_X^p\diff\mu(\omega)$. When $\Omega$ is a finite set and $\mu$ is the normalized counting measure, we will simply write $\|f\|_{L_p(\Omega;X)}$. 

Let $\ms{C}_n=\{-1,1\}^n$ be the discrete hypercube. For $i\in\{1,\ldots,n\}$, the $i$-th partial derivative of a function $f:\ms{C}_n\to X$ is defined by
\begin{equation}
\forall \ \e\in\ms{C}_n, \ \ \ \partial_if(\e) \eqdef \frac{f(\e)-f(\e_1,\ldots,\e_{i-1},-\e_i,\e_{i+1},\ldots,\e_n)}{2}.
\end{equation}
 In \cite{Pis86}, Pisier showed that for every $n\in\N$ and $p\in[1,\infty)$, \mbox{every $f:\ms{C}_n\to X$ satisfies}
\begin{equation} \label{eq:pisier}
\Big\|f - \frac{1}{2^n} \sum_{\delta\in\ms{C}_n} f(\delta) \Big\|_{L_p(\ms{C}_n;X)} \leq \mf{P}_p^n(X) \Big( \frac{1}{2^n} \sum_{\delta\in\ms{C}_n} \Big\|\sum_{i=1}^n \delta_i \partial_i f \Big\|_{L_p(\ms{C}_n;X)}^p\Big)^{1/p},
\end{equation}
with $\mf{P}_p^n(X) = 2e\log n$. Showing that $\mf{P}_p^n(X)$ is bounded by a constant depending only on $p$ and the geometry of the given Banach space $X$, is of fundamental importance in the theory of nonlinear type (see \cite{Pis86, NS02}). The first positive and negative results in this direction were obtained by Talagrand in \cite{Tal93}, who showed that $\mf{P}_p^n(\R) = \Theta(1)$ and $\mf{P}_p^n(\ell_\infty) = \Theta(\log n)$ for every $p\in[1,\infty)$.

Talagrand's dimension independent scalar valued inequality \eqref{eq:pisier} was greatly generalized in the range $p\in(1,\infty)$ by Naor and Schechtman \cite{NS02}. Recall that a Banach space $(X,\|\cdot\|_X)$ is called a UMD space if for every $p\in(1,\infty)$, there exists a constant $\beta_p\in(0,\infty)$ such that for every $n\in\N$, every probability space $(\Omega, \ms{F},\mu)$ and every filtration $\{\ms{F}_i\}_{i=0}^n$ of sub-$\sigma$-algebras of $\ms{F}$, every martingale $\{\ms{M}_i:\Omega \to X\}_{i=0}^n$ adapted to $\{\ms{F}_i\}_{i=0}^n$ satisfies
\begin{equation} \label{eq:umd}
\max_{\delta=(\delta_1,\ldots,\delta_n)\in\ms{C}_n}\Big\| \sum_{i=1}^n \delta_i (\ms{M}_i-\ms{M}_{i-1}) \Big\|_{L_p(\Omega,\mu;X)} \leq \beta_p \|\ms{M}_n-\ms{M}_0\|_{L_p(\Omega,\mu;X)}.
\end{equation}
The least constant $\beta_p\in(0,\infty)$ for which \eqref{eq:umd} holds is called the UMD$_p$ constant of $X$ and is denoted by $\beta_p(X)$. In \cite{NS02}, Naor and Schechtman proved that for every UMD Banach space $X$ and $p\in(1,\infty)$,
\begin{equation} \label{eq:ns}
\sup_{n\in\N}\mf{P}_p^n(X) \leq \beta_p(X).
\end{equation}
Their result was later strengthened by  Hyt\"onen  and Naor \cite{HN13} in terms of the random martingale transform inequalities of Garling, see \cite{Gar90}. Recall that a Banach space $(X,\|\cdot\|_X)$ is a UMD$^+$ space if for every $p\in(1,\infty)$ there exists a constant $\beta_p^+\in(0,\infty)$ such that for every martingale $\{\ms{M}_i:\Omega \to X\}_{i=0}^n$ as before, we have
\begin{equation} \label{eq:umd+}
\Big(\frac{1}{2^n} \sum_{\delta\in\ms{C}_n} \Big\| \sum_{i=1}^n \delta_i (\ms{M}_i-\ms{M}_{i-1}) \Big\|^p_{L_p(\Omega,\mu;X)}\Big)^{1/p} \leq \beta_p^+ \|\ms{M}_n-\ms{M}_0\|_{L_p(\Omega,\mu;X)}.
\end{equation}
Similarly, $X$ is a UMD$^-$ Banach space if for every $p\in(1,\infty)$ there exists a constant $\beta_p^-\in(0,\infty)$ such that for every martingale $\{\ms{M}_i:\Omega \to X\}_{i=0}^n$ as before, we have
\begin{equation} \label{eq:umd-}
\|\ms{M}_n-\ms{M}_0\|_{L_p(\Omega,\mu;X)} \leq \beta_p^- \Big(\frac{1}{2^n} \sum_{\delta\in\ms{C}_n} \Big\| \sum_{i=1}^n \delta_i (\ms{M}_i-\ms{M}_{i-1}) \Big\|^p_{L_p(\Omega,\mu;X)}\Big)^{1/p}.
\end{equation}
The least positive constants $\beta_p^+, \beta_p^-$ for which \eqref{eq:umd+} and \eqref{eq:umd-} hold are respectively called the UMD$_p^+$ and UMD$_p^-$ constants of $X$ and denoted by $\beta_p^+(X)$ and $\beta_p^-(X)$. In \cite{HN13}, Hyt\"onen and Naor showed that for every Banach space $X$ whose dual $X^\ast$ is a UMD$^+$ space and $p\in(1,\infty)$,
\begin{equation} 
\sup_{n\in\N} \mf{P}_p^n(X) \leq \beta_{p/(p-1)}^+(X^\ast).
\end{equation}
In fact, in \cite[Theorem~1.4]{HN13}, the authors proved a generalization (see \eqref{eq:hn}) of inequality \eqref{eq:pisier} for a family of $n$ functions $\{f_i:\ms{C}_n\to X\}_{i=1}^n$ under the assumption that the dual of $X$ is UMD$^+$.

The main result of the present note is a different inequality of this nature with respect to a Fourier analytic parameter of $X$. For a Banach space $(X,\|\cdot\|_X)$ and $p\in(1,\infty)$, let $\mf{s}_p(X)\in(0,\infty]$ be the least constant $\mf{s} \in(0,\infty]$ such that the following holds. For every probability space $(\Omega, \ms{F},\mu)$, $n\in\N$ and filtration $\{\ms{F}_i\}_{i=1}^n$ of sub-$\sigma$-algebras of $\ms{F}$ with corresponding vector valued conditional expectations $\{\ms{E}_i\}_{i=1}^n$, every sequence of functions $\{f_i:\Omega\to X\}_{i=1}^n$ satisfies
\begin{equation} \label{eq:bourgain}
\Big(\frac{1}{2^n} \sum_{\delta\in\ms{C}_n} \Big\| \sum_{i=1}^n \delta_i\ms{E}_i f_i\Big\|_{L_p(\Omega,\mu;X)}^p\Big)^{1/p} \leq \mf{s}\Big( \frac{1}{2^n} \sum_{\delta\in\ms{C}_n} \Big\| \sum_{i=1}^n \delta_if_i\Big\|_{L_p(\Omega,\mu;X)}^p\Big)^{1/p}.
\end{equation}
The square function inequality \eqref{eq:bourgain} originates in Stein's classical work \cite{Ste70}, where he showed that $\mf{s}_p(\R) = \Theta(1)$ for every $p\in(1,\infty)$. In the vector valued setting which is of interest here, it has been proven by Bourgain in \cite{Bou86} that for every UMD$^+$ Banach space and $p\in(1,\infty)$,
\begin{equation} \label{eq:bou}
\mf{s}_p(X) \leq\beta_p^+(X).
\end{equation}
For a function $f:\ms{C}_n\to X$ and $i\in\{0,1,\ldots,n\}$ denote by
\begin{equation} \label{eq:defEi}
\forall \ \e\in\ms{C}_n, \ \ \ \ms{E}_i f (\e) \eqdef \frac{1}{2^{n-i}} \sum_{\delta_{i+1},\ldots,\delta_n\in\{-1,1\}} f(\e_1,\ldots,\e_i,\delta_{i+1},\ldots,\delta_n),
\end{equation}
so that $\ms{E}_n f = f$ and $\ms{E}_0 f = \frac{1}{2^n} \sum_{\delta\in\ms{C}_n} f(\delta)$. The main result of this note is the following theorem.

\begin{theorem} \label{thm}
Fix $p\in(1,\infty)$ and let $(X,\|\cdot\|_X)$ be a Banach space with $\mf{s}_p(X)<\infty$. If, additionally, $X$ is a UMD$^-$ space, then for every $n\in\N$ and functions $f_1,\ldots,f_n:\ms{C}_n\to X$, we have
\begin{equation} \label{eq}
\begin{split}
\Big\| \sum_{i=1}^n (\ms{E}_i f_i - \ms{E}_{i-1}f_i)\Big\|_{L_p(\ms{C}_n;X)} \leq \mf{s}_p(X) \beta_p^-(X) \Big(\frac{1}{2^n} \sum_{\delta\in\ms{C}_n} \Big\| \sum_{i=1}^n \delta_i \partial_i f_i\Big\|_{L_p(\ms{C}_n;X)}^p\Big)^{1/p}.
\end{split}
\end{equation}
Choosing $f_1=\cdots=f_n=f$, we deduce that the constants in Pisier's inequality \eqref{eq:pisier} satisfy
\begin{equation} \label{eq:our}
\sup_{n\in\N} \mf{P}_p^n(X) \leq \mf{s}_p(X) \beta_p^-(X).
\end{equation}
\end{theorem}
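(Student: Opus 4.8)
The plan is to compose the two hypotheses in sequence, working throughout on the probability space $\ms{C}_n$ with the uniform measure and the dyadic filtration $\{\ms{F}_i\}_{i=0}^n$, where $\ms{F}_i$ is generated by the first $i$ coordinates, so that the operator $\ms{E}_i$ of \eqref{eq:defEi} is the conditional expectation given $\ms{F}_i$. The starting point is the observation that
\[
\ms{M}_k \eqdef \sum_{i=1}^k \big(\ms{E}_i f_i - \ms{E}_{i-1}f_i\big), \qquad k\in\{0,1,\dots,n\},
\]
is a martingale adapted to $\{\ms{F}_k\}_{k=0}^n$: each summand $\ms{E}_i f_i - \ms{E}_{i-1}f_i$ is $\ms{F}_i$-measurable, and $\ms{E}_{i-1}(\ms{E}_i f_i - \ms{E}_{i-1}f_i)=0$ because $\ms{E}_{i-1}\ms{E}_i = \ms{E}_{i-1}$; hence $\ms{M}_0=0$ and $\ms{M}_i-\ms{M}_{i-1}=\ms{E}_i f_i-\ms{E}_{i-1}f_i$. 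In particular the left-hand side of \eqref{eq} is $\|\ms{M}_n-\ms{M}_0\|_{L_p(\ms{C}_n;X)}$, and applying the UMD$^-$ inequality \eqref{eq:umd-} to $\{\ms{M}_i\}_{i=0}^n$ bounds it by $\beta_p^-(X)$ times $\big(\frac{1}{2^n}\sum_{\delta\in\ms{C}_n}\|\sum_{i=1}^n\delta_i(\ms{E}_i f_i-\ms{E}_{i-1}f_i)\|_{L_p(\ms{C}_n;X)}^p\big)^{1/p}$.

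The second and genuinely essential step is an operator identity that rewrites these martingale differences into the input shape of \eqref{eq:bourgain}. For any $\ms{F}_i$-measurable $g:\ms{C}_n\to X$ one has $g-\ms{E}_{i-1}g = \partial_i g$, since subtracting the average over the $i$-th coordinate coincides with taking the $i$-th difference; applying this with $g=\ms{E}_i f_i$ and using that $\partial_i$ acts only on coordinate $i$ while $\ms{E}_i$ averages only over coordinates $i+1,\dots,n$ (so the two operators commute), we obtain
\[
\ms{E}_i f_i - \ms{E}_{i-1}f_i \;=\; \partial_i(\ms{E}_i f_i) \;=\; \ms{E}_i(\partial_i f_i), \qquad i\in\{1,\dots,n\}.
\]
Consequently $\sum_{i=1}^n\delta_i(\ms{E}_i f_i-\ms{E}_{i-1}f_i)=\sum_{i=1}^n\delta_i\ms{E}_i(\partial_i f_i)$ for every $\delta\in\ms{C}_n$.

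Finally I would invoke the square function inequality \eqref{eq:bourgain} on $\ms{C}_n$, with the dyadic filtration above and the sequence $\partial_1 f_1,\dots,\partial_n f_n:\ms{C}_n\to X$, to bound $\big(\frac{1}{2^n}\sum_\delta\|\sum_i\delta_i\ms{E}_i(\partial_i f_i)\|_{L_p(\ms{C}_n;X)}^p\big)^{1/p}$ by $\mf{s}_p(X)\big(\frac{1}{2^n}\sum_\delta\|\sum_i\delta_i\partial_i f_i\|_{L_p(\ms{C}_n;X)}^p\big)^{1/p}$; chaining this with the output of the first step yields \eqref{eq}. The inequality \eqref{eq:our} then follows by taking $f_1=\cdots=f_n=f$, since the left-hand side telescopes to $\ms{E}_n f-\ms{E}_0 f = f-\frac{1}{2^n}\sum_{\delta\in\ms{C}_n}f(\delta)$, which is precisely the left-hand side of Pisier's inequality \eqref{eq:pisier}.

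As to where the difficulty lies, there is essentially no hard analytic estimate: the theorem is a clean composition of two known inequalities, and the route from the conclusion of \eqref{eq:umd-} to the hypothesis of \eqref{eq:bourgain} is exactly the identity $\ms{E}_i f_i - \ms{E}_{i-1}f_i = \ms{E}_i\partial_i f_i$. The only points needing care are the bookkeeping ones — verifying that $\{\ms{M}_k\}$ is genuinely a martingale (so that $\ms{M}_0=0$ and UMD$^-$ applies) and checking the commutation $\partial_i \ms{E}_i = \ms{E}_i \partial_i$ underlying that identity.
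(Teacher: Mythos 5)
Your proposal is correct and follows essentially the same route as the paper: identify $\{\ms{E}_i f_i-\ms{E}_{i-1}f_i\}_{i=1}^n$ as a martingale difference sequence for the dyadic filtration, apply the UMD$^-$ inequality, rewrite the differences via the identity $\ms{E}_i f_i-\ms{E}_{i-1}f_i=\ms{E}_i\partial_i f_i$ (which the paper encodes as $\ms{E}_{i-1}=\msf{E}_i\circ\ms{E}_i$ with $\msf{E}_i=\msf{id}-\partial_i$), and finish with the Stein-type inequality \eqref{eq:bourgain} applied to $\partial_1f_1,\ldots,\partial_nf_n$. The telescoping deduction of \eqref{eq:our} from the case $f_1=\cdots=f_n=f$ also matches the paper.
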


Combining \eqref{eq:our} with Bourgain's inequality \eqref{eq:bou}, we deduce that $\sup_{n\in\N} \mf{P}_p^n(X) \leq \beta_p^+(X)\beta_p^-(X)$, which is weaker than Naor and Schechtman's bound \eqref{eq:ns}. Nevertheless, it appears to be unknown (see \cite[p.~197]{Pis16}) whether every Banach space $X$ with $\mf{s}_p(X)<\infty$ is necessarily a UMD$^+$ space. Therefore, it is conceivable that there exist Banach spaces $X$ for which inequality \eqref{eq:our} does not follow from the previously known results of \cite{NS02,HN13}. We will see in Proposition \ref{fact:hn1} below that if the dual $X^\ast$ of a Banach space $X$ is UMD$^+$, then $X$ satisfies the assumptions of Theorem \ref{thm}. Therefore, Theorem \ref{thm} also contains the aforementioned result of \cite{HN13}. 

Moreover, Theorem \ref{thm} implies an inequality similar to \cite[Theorem~1.4]{HN13} (see also Remark \ref{rem} below for comparison), under different assumptions. We will need some standard terminology from discrete Fourier analysis. Recall that every function \mbox{$f:\ms{C}_n\to X$ can be expanded in a Walsh series as}
\begin{equation} \label{eq:walsh}
f = \sum_{A\subseteq\{1,\ldots,n\}} \widehat{f}(A) w_A,
\end{equation}
where the Walsh function $w_A:\ms{C}_n\to\{-1,1\}$ is given by $w_A(\e) = \prod_{i\in A}\e_i$ for $\e=(\e_1,\ldots,\e_n)\in\ms{C}_n$, and $\widehat{f}(A)\in X$. Moreover, the fractional hypercube Laplacian of\mbox{ a function $f:\ms{C}_n\to X$ is given by}
\begin{equation}
\forall \ \alpha\in\R, \ \ \ \ \Delta^\alpha\Big( \sum_{A\subseteq\{1,\ldots,n\}} \widehat{f}(A) w_A \Big) \eqdef  \sum_{\substack{A\subseteq\{1,\ldots,n\} \\ A\neq\emptyset}} |A|^\alpha \widehat{f}(A) w_A.
\end{equation}

\begin{corollary} \label{cor}
Fix $p\in(1,\infty)$ and let $(X,\|\cdot\|_X)$ be a Banach space with $\mf{s}_p(X)<\infty$. If, additionally, $X$ is a UMD$^-$ space, then for every $n\in\N$ and functions $f_1,\ldots,f_n:\ms{C}_n\to X$, we have
\begin{equation} \label{eq:cor}
\begin{split}
\Big\| \sum_{i=1}^n \Delta^{-1}\partial_i f_i \Big\|_{L_p(\ms{C}_n;X)} \leq \mf{s}_p(X) \beta_p^-(X) \Big(\frac{1}{2^n} \sum_{\delta\in\ms{C}_n} \Big\| \sum_{i=1}^n \delta_i \partial_i f_i\Big\|_{L_p(\ms{C}_n;X)}^p\Big)^{1/p}.
\end{split}
\end{equation}
\end{corollary}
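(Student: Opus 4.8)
The plan is to derive Corollary~\ref{cor} from Theorem~\ref{thm} by averaging the conclusion of Theorem~\ref{thm} over all relabellings of the coordinates of $\ms{C}_n$. Throughout we use the elementary Walsh-analytic identities: if $g=\sum_{A}\widehat g(A)w_A$ then $\partial_i g=\sum_{A\ni i}\widehat g(A)w_A$ and $\ms{E}_i g=\sum_{A\subseteq\{1,\ldots,i\}}\widehat g(A)w_A$, so that $\ms{E}_i g-\ms{E}_{i-1}g=\sum_{\max A=i}\widehat g(A)w_A$, and $\Delta^{-1}\partial_i g=\sum_{A\ni i}|A|^{-1}\widehat g(A)w_A$. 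In particular the left-hand side of \eqref{eq:cor} equals $\big\|\sum_{\emptyset\neq B}\big(|B|^{-1}\sum_{i\in B}\widehat{f_i}(B)\big)w_B\big\|_{L_p(\ms{C}_n;X)}$, where $B$ ranges over nonempty subsets of $\{1,\ldots,n\}$; this is the quantity to be estimated.

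For a permutation $\pi$ of $\{1,\ldots,n\}$ let $R_\pi$ be the coordinate-permutation operator given by $(R_\pi g)(\e)=g(\e_{\pi(1)},\ldots,\e_{\pi(n)})$; it is a surjective isometry of $L_p(\ms{C}_n;X)$ satisfying $R_\pi w_A=w_{\pi(A)}$, equivalently $\widehat{R_\pi g}(A)=\widehat g(\pi^{-1}(A))$, and $\partial_i R_\pi g=R_\pi\partial_{\pi^{-1}(i)}g$. Fix $\pi$ and apply Theorem~\ref{thm} to the functions $\tilde f_i:=R_{\pi^{-1}}f_{\pi(i)}$, $i\in\{1,\ldots,n\}$. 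For the right-hand side, $\partial_i\tilde f_i=R_{\pi^{-1}}\partial_{\pi(i)}f_{\pi(i)}$, so reindexing $j=\pi(i)$, using that $R_{\pi^{-1}}$ is an isometry and that $\delta\mapsto(\delta_{\pi^{-1}(j)})_{j=1}^n$ is a bijection of $\ms{C}_n$, one finds $\frac1{2^n}\sum_{\delta}\big\|\sum_i\delta_i\partial_i\tilde f_i\big\|_{L_p}^p=\frac1{2^n}\sum_{\delta}\big\|\sum_j\delta_j\partial_j f_j\big\|_{L_p}^p$, so this quantity does not depend on $\pi$. For the left-hand side, since $\widehat{\tilde f_i}(A)=\widehat{f_{\pi(i)}}(\pi(A))$, the identity for $\ms{E}_i-\ms{E}_{i-1}$ and the substitution $B=\pi(A)$ (under which the condition $\max A=i$ says exactly that $\pi(i)$ is the element of $B$ appearing last in the list $\pi(1),\ldots,\pi(n)$, call it $\mu_\pi(B)$) give $\sum_{i}(\ms{E}_i\tilde f_i-\ms{E}_{i-1}\tilde f_i)=R_{\pi^{-1}}\big(\sum_{\emptyset\neq B}\widehat{f_{\mu_\pi(B)}}(B)w_B\big)$. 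Therefore Theorem~\ref{thm} applied to $\tilde f_1,\ldots,\tilde f_n$, together with the fact that $R_{\pi^{-1}}$ is an isometry, yields for every $\pi$
\[
\Big\|\sum_{\emptyset\neq B}\widehat{f_{\mu_\pi(B)}}(B)w_B\Big\|_{L_p(\ms{C}_n;X)}\le\mf{s}_p(X)\beta_p^-(X)\Big(\frac1{2^n}\sum_{\delta\in\ms{C}_n}\Big\|\sum_{j=1}^n\delta_j\partial_j f_j\Big\|_{L_p(\ms{C}_n;X)}^p\Big)^{1/p}.
\]

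It remains to average over a uniformly random permutation $\pi$ and pull the expectation inside the norm by Jensen's inequality. The right-hand side is unaffected, and on the left, for each fixed nonempty $B$ the relative order of the elements of $B$ inside the list $\pi(1),\ldots,\pi(n)$ is a uniformly random linear order on $B$, so $\mu_\pi(B)$ is uniformly distributed on $B$ and $\E_\pi\widehat{f_{\mu_\pi(B)}}(B)=|B|^{-1}\sum_{i\in B}\widehat{f_i}(B)$. Hence $\E_\pi\big[\sum_{\emptyset\neq B}\widehat{f_{\mu_\pi(B)}}(B)w_B\big]=\sum_{\emptyset\neq B}\big(|B|^{-1}\sum_{i\in B}\widehat{f_i}(B)\big)w_B=\sum_{i=1}^n\Delta^{-1}\partial_i f_i$, and \eqref{eq:cor} follows. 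The only steps needing care are the index bookkeeping under $R_\pi$ and the claim that $\mu_\pi(B)$ is uniform on $B$; this last point is exactly what generates the averaging weights $|B|^{-1}$ defining $\Delta^{-1}$, and beyond invoking Theorem~\ref{thm} no new difficulty is involved.
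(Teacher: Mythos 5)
Your proof is correct and follows essentially the same symmetrization-over-permutations argument as the paper; the only cosmetic difference is that you deduce the permuted inequality by applying Theorem \ref{thm} verbatim to the relabelled functions $R_{\pi^{-1}}f_{\pi(i)}$ and using that coordinate permutations act isometrically on $L_p(\ms{C}_n;X)$, whereas the paper re-runs the proof of Theorem \ref{thm} with the permuted filtration $\ms{F}_i^\pi=\sigma(\e_{\pi(1)},\ldots,\e_{\pi(i)})$. Both arguments then identify the average over $\pi$ with $\sum_{i=1}^n\Delta^{-1}\partial_i f_i$ via the uniform distribution of the last element of each set $B$ and conclude by convexity.
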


\smallskip

\noindent {\bf Acknowledgements.} I would like to thank Assaf Naor for helpful discussions.

\smallskip

\section{Proofs}

We first present the proof of Theorem \ref{thm}.

\medskip

\noindent {\it Proof of Theorem \ref{thm}.} For a function $h:\ms{C}_n\to X$ and $i\in\{1,\ldots,n\}$ consider the averaging operator
\begin{equation} \label{eq:crucialident}
\forall \ \e\in\ms{C}_n, \ \ \ \ \msf{E}_i h(\e) \eqdef \frac{h(\e)+h(\e_1,\ldots,\e_{i-1},-\e_i,\e_{i+1},\ldots,\e_n)}{2} = (\msf{id}-\partial_i)h(\e),
\end{equation}
where $\msf{id}$ is the identity operator. Then, for every $i\in\{0,1,\ldots,n\}$ we have the identities
\begin{equation}
\ms{E}_i h= \msf{E}_{i+1} \circ\cdots\circ \msf{E}_n h = \mb{E}[h|\ms{F}_i],
\end{equation}
where $\ms{F}_i = \sigma(\e_1,\ldots,\e_i)$. Since for every $i\in\{1,\ldots,n\}$,
\begin{equation}
\mb{E}\big[ \ms{E}_i f_i - \ms{E}_{i-1} f_i \big| \ms{F}_{i-1}\big] = 0,
\end{equation}
the sequence $\{\ms{E}_i f_i - \ms{E}_{i-1} f_i \}_{i=1}^n$ is a martingale difference sequence and thus the UMD$^-$ condition and \eqref{eq:bourgain} imply that
\begin{equation} \label{long ineq}
\begin{split}
\Big\| \sum_{i=1}^n (\ms{E}_i f_i  - \ms{E}_{i-1}f_i)\Big\|_{L_p(\ms{C}_n;X)} 
& \stackrel{\eqref{eq:umd-}}{\leq} \beta_p^-(X) \Big( \frac{1}{2^n} \sum_{\delta\in\ms{C}_n} \Big\| \sum_{i=1}^n \delta_i (\ms{E}_i f_i - \ms{E}_{i-1}f_i)\Big\|_{L_p(\ms{C}_n;X)}^p\Big)^{1/p}
\\ & \stackrel{\eqref{eq:crucialident}}{=} \beta_p^-(X) \Big( \frac{1}{2^n} \sum_{\delta\in\ms{C}_n} \Big\| \sum_{i=1}^n \delta_i \ms{E}_i \partial_i f_i\Big\|_{L_p(\ms{C}_n;X)}^p\Big)^{1/p} 
\\ & \stackrel{\eqref{eq:bourgain}}{\leq} \mf{s}_p(X) \beta_p^-(X) \Big(\frac{1}{2^n} \sum_{\delta\in\ms{C}_n} \Big\| \sum_{i=1}^n \delta_i \partial_i f_i\Big\|_{L_p(\ms{C}_n;X)}^p\Big)^{1/p}
\end{split},
\end{equation}
which completes the proof.
\hfill$\Box$

\medskip

We will now derive Corollary \ref{cor} from Theorem \ref{thm}. The proof follows a symmetrization argument of \cite{HN13}.

\medskip

\noindent {\it Proof of Corollary \ref{cor}.} As noticed in \eqref{long ineq} above, \eqref{eq} can be equivalently written as
\begin{equation}
\Big\| \sum_{i=1}^n \ms{E}_i \partial_i f_i \Big\|_{L_p(\ms{C}_n;X)} \leq \mf{s}_p(X) \beta_p^-(X) \Big(\frac{1}{2^n} \sum_{\delta\in\ms{C}_n} \Big\| \sum_{i=1}^n \delta_i \partial_i f_i\Big\|_{L_p(\ms{C}_n;X)}^p\Big)^{1/p}.
\end{equation}
Fix a permutation $\pi\in S_n$ and consider the filtration $\{\ms{F}_i^\pi\}_{i=0}^n$ given by $\ms{F}_i^\pi = \sigma(\e_{\pi(1)},\ldots,\e_{\pi(i)})$ with corresponding conditional expectations $\{\ms{E}_i^\pi\}_{i=0}^n$. Repeating the argument of the proof of Theorem \ref{thm} for this filtration and the martingale difference sequence $\{\ms{E}_i^\pi f_{\pi(i)} - \ms{E}_{i-1}^\pi f_{\pi(i)}\}_{i=1}^n$, we see that for every $\pi \in S_n$,
\begin{equation} \label{avr ineq}
\begin{split}
\Big\| \sum_{i=1}^n \ms{E}_i^\pi \partial_{\pi(i)} f_{\pi(i)} \Big\|_{L_p(\ms{C}_n;X)} & \leq \mf{s}_p(X) \beta_p^-(X) \Big(\frac{1}{2^n} \sum_{\delta\in\ms{C}_n} \Big\| \sum_{i=1}^n \delta_i \partial_{\pi(i)} f_{\pi(i)}\Big\|_{L_p(\ms{C}_n;X)}^p\Big)^{1/p}
\\ & =\mf{s}_p(X) \beta_p^-(X) \Big(\frac{1}{2^n} \sum_{\delta\in\ms{C}_n} \Big\| \sum_{i=1}^n \delta_i \partial_i f_i\Big\|_{L_p(\ms{C}_n;X)}^p\Big)^{1/p},
\end{split}
\end{equation}
since $(\delta_1,\ldots,\delta_n)$ has the same distribution as $(\delta_{\pi(1)},\ldots,\delta_{\pi(n)})$.
An obvious adaptation of \eqref{eq:defEi} along with \eqref{eq:walsh} shows that for every $h:\ms{C}_n\to X$,
\begin{equation}
\ms{E}_i^\pi h = \sum_{A\subseteq\{\pi(1),\ldots,\pi(i)\}} \widehat{h}(A) w_A
\end{equation}
where $\widehat{h}(A)$ are the Walsh coefficients of $h$. Therefore, expanding each $f_{\pi(i)}$ as a Walsh series \eqref{eq:walsh} we have
\begin{equation}
\forall \ i\in\{1,\ldots,n\}, \ \ \ \ \ms{E}^\pi_i\partial_{\pi(i)} f_{\pi(i)} = \sum_{\substack{A\subseteq\{1\ldots,n\} \\ \max \pi^{-1}(A)=i}} \widehat{f_{\pi(i)}}(A) w_A
\end{equation}
and therefore
\begin{equation} \label{eq:av perm}
\sum_{i=1}^n \ms{E}^\pi_i\partial_{\pi(i)} f_{\pi(i)} = \sum_{A\subseteq\{1,\ldots,n\}} \widehat{f_{\pi(\max \pi^{-1}(A))}}(A) w_A.
\end{equation}
Averaging \eqref{eq:av perm} over all permutations $\pi\in S_n$ and using the fact that $\pi(\max\pi^{-1}(A))$ is uniformly distributed in $A$, we get
\begin{equation*}
\frac{1}{n!}\sum_{\pi\in S_n} \sum_{i=1}^n \ms{E}^\pi_i\partial_{\pi(i)} f_{\pi(i)} =\!\!\! \sum_{\substack{A\subseteq\{1,\ldots,n\}\\ A\neq\emptyset}} \frac{1}{|A|} \sum_{i\in A} \widehat{f_i}(A) w_A = \sum_{i=1}^n \sum_{\substack{A\subseteq\{1,\ldots,n\} \\ i\in A}} \frac{1}{|A|} \widehat{f_i}(A) w_A = \sum_{i=1}^n \Delta^{-1}\partial_i f_i.
\end{equation*}
Hence, by convexity we finally deduce that
\begin{equation}
\begin{split}
\Big\| \sum_{i=1}^n \Delta^{-1}\partial_i f_i\Big\|_{L_p(\ms{C}_n;X)} & \leq \frac{1}{n!}\sum_{\pi\in S_n} \Big\| \sum_{i=1}^n \ms{E}_i^\pi \partial_{\pi(i)} f_{\pi(i)} \Big\|_{L_p(\ms{C}_n;X)}
\\ & \stackrel{\eqref{avr ineq}}{\leq} \mf{s}_p(X) \beta_p^-(X) \Big(\frac{1}{2^n} \sum_{\delta\in\ms{C}_n} \Big\| \sum_{i=1}^n \delta_i \partial_i f_i\Big\|_{L_p(\ms{C}_n;X)}^p\Big)^{1/p},
\end{split}
\end{equation}
which completes the proof.
\hfill$\Box$

\begin{remark} \label{rem}
In \cite{HN13}, Hyt\"onen and Naor obtained a different extension of Pisier's inequality \eqref{eq:pisier} for Banach spaces whose dual is UMD$^+$. For a function $F:\ms{C}_n\times\ms{C}_n\to X$ and $i\in\{1,\ldots,n\}$, let $F_i:\ms{C}_n\to X$ be given by
\begin{equation}
\forall \ \e\in\ms{C}_n, \ \ \ \ F_i(\e) \eqdef \frac{1}{2^n}\sum_{\delta\in\ms{C}_n} \delta_i F(\e,\delta).
\end{equation}
In \cite[Theorem~1.4]{HN13}, it was shown that for every $p\in(1,\infty)$ and every function $F:\ms{C}_n\times\ms{C}_n\to X$,
\begin{equation} \label{eq:hn0}
\Big\| \sum_{i=1}^n \Delta^{-1}\partial_i F_i\Big\|_{L_p(\ms{C}_n;X)} \leq \beta_{p/(p-1)}^+(X^\ast) \|F\|_{L_p(\ms{C}_n\times\ms{C}_n;X)}.
\end{equation}
In fact, since every Banach space whose dual is UMD$^+$ is $K$-convex (see \cite{Pis16} and Section \ref{sec:3} below) the validity of inequality \eqref{eq:hn0} is equivalent to its validity for functions of the form $F(\e,\delta) = \sum_{i=1}^n\delta_i F_i(\e)$, where $F_1,\ldots,F_n:\ms{C}_n\to X$. In other words, \cite[Theorem~1.4]{HN13} is equivalent to the fact that if $X^\ast$ is UMD$^+$, then for every $F_1,\ldots,F_n:\ms{C}_n\to X$ and $p\in(1,\infty)$,
\begin{equation} \label{eq:hn}
\Big\| \sum_{i=1}^n \Delta^{-1}\partial_i F_i\Big\|_{L_p(\ms{C}_n;X)} \leq A_p(X)  \Big(\frac{1}{2^n} \sum_{\delta\in\ms{C}_n} \Big\| \sum_{i=1}^n \delta_i  F_i\Big\|_{L_p(\ms{C}_n;X)}^p\Big)^{1/p},
\end{equation}
up to the value of the constant $A_p(X)$. In particular, applying \eqref{eq:hn} to $F_i=\partial_i f_i$, one recovers Corollary \ref{cor}, so inequality \eqref{eq:hn} of \cite{HN13} is formally stronger than \eqref{eq:cor} in the class of spaces whose dual is UMD$^+$.
\end{remark}

\section{Concluding remarks} \label{sec:3}

In this section we will compare our result with existing theorems in the literature. Recall that a Banach $X$ space is $K$-convex if $X$ does not contain the family $\{\ell_1^n\}_{n=1}^\infty$ with uniformly bounded distortion. We will need the following lemma.
 
 \begin{lemma} \label{f2}
If a space $(X,\|\cdot\|_X)$ satisfies $\mf{s}_p(X)<\infty$ for some \mbox{$p\in(1,\infty)$, then $X$ is $K$-convex.}
 \end{lemma}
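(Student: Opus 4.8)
The plan is to argue by contraposition: if $X$ is \emph{not} $K$-convex, then $X$ contains the spaces $\ell_1^n$ with uniformly bounded distortion (this is the content of Pisier's $K$-convexity theorem, which characterizes $K$-convexity via the $K$-convexity constant $K(X) = \sup_n \|\mathrm{Rad}_n\|_{L_p(\ms{C}_n;X)\to L_p(\ms{C}_n;X)}$ being finite, equivalently via absence of $\ell_1^n$'s), and we must show that then $\mf{s}_p(X) = \infty$. The cleanest route is to reduce to $X = \ell_1^n$ itself: since $\mf{s}_p(\cdot)$ can only decrease when passing to a subspace, and since a finite-distortion copy of $\ell_1^n$ changes $\mf{s}_p$ by at most a bounded factor, it suffices to exhibit, for each $n$, a probability space, a filtration, and functions $f_1,\ldots,f_n$ valued in $\ell_1^n$ (or in $L_1$) that violate \eqref{eq:bourgain} with a constant growing in $n$.

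The key step is the construction. Take $\Omega = \ms{C}_n$ with the uniform measure, $\ms{F}_i = \sigma(\e_1,\ldots,\e_i)$, so $\ms{E}_i$ is the martingale conditional expectation from \eqref{eq:defEi}. Choose $f_i : \ms{C}_n \to \ell_1^n$ to be the function $f_i(\e) = \e_i \cdot e_i$, where $e_1,\ldots,e_n$ is the standard basis of $\ell_1^n$. Then $\ms{E}_i f_i(\e) = \e_i e_i$ (since $\e_i$ is $\ms{F}_i$-measurable), so the left-hand side of \eqref{eq:bourgain} is
\[
\Big(\frac{1}{2^n}\sum_{\delta\in\ms{C}_n}\Big\|\sum_{i=1}^n \delta_i \e_i e_i\Big\|_{L_p(\ms{C}_n;\ell_1^n)}^p\Big)^{1/p} = \Big\|\sum_{i=1}^n |e_i|\Big\|_{\ell_1^n} \cdot (\text{const}) = n \cdot (\text{const}),
\]
because $\big\|\sum_i \delta_i\e_i e_i\big\|_{\ell_1^n} = \sum_i |\delta_i\e_i| = n$ pointwise. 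On the right-hand side, $\sum_i \delta_i f_i(\e) = \sum_i \delta_i \e_i e_i$ has the same $\ell_1^n$-norm $n$ pointwise, which would give ratio $1$ — so this naive choice fails, and the actual construction must exploit cancellation on the right but not the left. The correct move is to use Rademacher-type functions on a \emph{larger} product of cubes or to mimic Bourgain's original lower bound for Stein's inequality: take $f_i$ depending on an auxiliary block of variables so that $\sum_i \delta_i f_i$ enjoys square-function cancellation (size $\sqrt n$ in an $L_1$-type norm via Khintchine fails, but in $\ell_1^n$ one gets genuine gain), while each $\ms{E}_i f_i$ kills the averaging in the $i$-th block and the martingale sum $\sum_i \delta_i \ms{E}_i f_i$ retains size $n$ (or $n/\sqrt n = \sqrt n$ worse).

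Concretely, I would follow the standard Stein-inequality counterexample on $L_1$: let $g_1,\ldots,g_n$ be independent functions (on a product probability space indexing an extra coordinate for each $i$) with $\ms{E} g_i = 0$, and set $f_i = (\text{partial average that has already integrated out } g_1,\ldots,g_{i-1})$. One arranges that $\ms{E}_i f_i = f_i$ while $\sum_i \delta_i f_i$ telescopes or decorrelates so that its $L_p(\ms{C}_n;L_1)$ norm is $O(\sqrt n)$ whereas $\big\|\sum_i \delta_i \ms{E}_i f_i\big\| = \big\|\sum_i \delta_i f_i'\big\|$ with $f_i'$ the \emph{final} martingale increments is of order $n^{1/p'}$ or $n$, forcing $\mf{s}_p(X) \gtrsim n^{c}$ for some $c>0$. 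The main obstacle is precisely pinning down this construction so that the two sides genuinely separate by an unbounded factor — in $L_1$ (equivalently in $\ell_1^n$'s with bounded distortion) this is exactly the failure of Stein's inequality, which is classical, but transcribing it into the martingale-transform normalization of \eqref{eq:bourgain} and verifying the norm estimates (lower bound on the left via the absence of cancellation in $\ell_1$, upper bound on the right via a square-function / Khintchine-in-$L_1$ estimate) is where the real work lies. Once the separation $\mf{s}_p(\ell_1^n) \to \infty$ is established, bounded distortion embeddings of $\ell_1^n$ into a non-$K$-convex $X$ immediately yield $\mf{s}_p(X) = \infty$, completing the contrapositive.
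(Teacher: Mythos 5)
Your overall strategy --- argue the contrapositive, use that a non-$K$-convex space contains the $\ell_1^m$'s (equivalently the spaces $L_1(\ms{C}_n;\R)$) with uniformly bounded distortion, and then violate \eqref{eq:bourgain} for an $L_1$-valued family by exploiting the classical failure of Stein's inequality --- is exactly the route the paper takes. But the heart of the lemma is the construction that you explicitly leave open (``where the real work lies''), and the sketch you give of it does not work as stated. It is internally inconsistent: if you arrange $\ms{E}_i f_i = f_i$, then $\sum_i \delta_i \ms{E}_i f_i = \sum_i \delta_i f_i$ and the two sides of \eqref{eq:bourgain} coincide, so no separation can arise from that mechanism. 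Moreover you never address the exponent mismatch: Stein's inequality only fails at the endpoints $p\in\{1,\infty\}$, whereas $\mf{s}_p$ is defined at a fixed $p\in(1,\infty)$ where the scalar inequality holds with an $O(1)$ constant; hence the blow-up must be produced entirely inside the target norm, and one needs a device that renders the outer $L_p(\ms{C}_n;\cdot)$ average harmless.

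The paper supplies precisely the ingredients your proposal is missing. Starting from Stein's $\{0,1\}$-valued example one takes $q\asymp n$ to get a $\sqrt{n}$ loss for the square function of the $\ms{E}_i$'s in $L_\infty(\ms{C}_n;\R)$, then dualizes in $L_\infty(\ms{C}_n;\ell_2^n)$ (the operators $\ms{E}_i$ are self-adjoint) and applies Khintchine's inequality to convert the square function into Rademacher averages, producing scalar functions $h_1,\ldots,h_n$ satisfying \eqref{eq:hidef} in $L_1$. The second, crucial, device is translation invariance: defining $H_i:\ms{C}_n\to L_1(\ms{C}_n;\R)$ by $[H_i(\e)](\e')=h_i(\e_1\e_1',\ldots,\e_n\e_n')$, the pointwise norms $\|\sum_i\delta_i\ms{E}_iH_i(\e)\|_{L_1(\ms{C}_n;\R)}$ and $\|\sum_i\delta_iH_i(\e)\|_{L_1(\ms{C}_n;\R)}$ are constant in $\e$, so the outer exponent $p$ becomes irrelevant, and composing with a $K$-distortion embedding $\msf{J}_n:L_1(\ms{C}_n;\R)\to X$ yields $\mf{s}_p(X)\gtrsim K^{-1}\sqrt{n}$, the desired contradiction. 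Without the duality-plus-Khintchine reduction and, above all, the translation trick neutralizing the fixed exponent $p$, your argument remains a plan rather than a proof.
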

 
 \begin{proof}
It is well known since Stein's work \cite{Ste70} that inequality \eqref{eq:bourgain} does not hold for $p\in\{1,\infty\}$ even for scalar valued functions. In fact, an inspection of the argument in \cite[p.~105]{Ste70} shows that for every $n\in\N$ there exists $n$ functions $g_1,\ldots,g_n:\ms{C}_n\to\{0,1\}$ such that for every $q\in(2,\infty)$,
\begin{equation}
\Big\| \Big(\sum_{i=1}^n \big(\ms{E}_i g_i\big)^2\Big)^{1/2}\Big\|_{L_q(\ms{C}_n;\R)} \gtrsim \Big( \int_0^n y^{q/2} e^{-y}\diff y \Big)^{1/q} \Big\| \Big(\sum_{i=1}^n g_i^2\Big)^{1/2}\Big\|_{L_q(\ms{C}_n;\R)},
\end{equation}
where $\{\ms{E}_i\}_{i=0}^n$ are the conditional expectations \eqref{eq:defEi}. Using the fact that $L_\infty(\ms{C}_n;\R)$ is isomorphic to $L_{n}(\ms{C}_n;\R)$, we thus deduce that
\begin{equation}
\begin{split}
\Big\| \Big(\sum_{i=1}^n \big(\ms{E}_i g_i\big)^2\Big)^{1/2}\Big\|_{L_\infty(\ms{C}_n;\R)} & \gtrsim \Big( \int_0^n y^{n/2} e^{-y}\diff y \Big)^{1/n} \Big\| \Big(\sum_{i=1}^n g_i^2\Big)^{1/2}\Big\|_{L_\infty(\ms{C}_n;\R)} 
\\ & \asymp \sqrt{n}\Big\| \Big(\sum_{i=1}^n g_i^2\Big)^{1/2}\Big\|_{L_\infty(\ms{C}_n;\R)}
\end{split}.
\end{equation}
Therefore, by duality in $L_\infty(\ms{C}_n;\ell_2^n)$ and Khintchine's inequality \cite{Khi23}, we deduce that there exists $n$ functions $h_1,\ldots,h_n:\ms{C}_n\to\R$ such that
\begin{equation} \label{eq:hidef}
\frac{1}{2^n}\sum_{\delta\in\ms{C}_n}\Big\| \sum_{i=1}^n \delta_i \ms{E}_i h_i\Big\|_{L_1(\ms{C}_n;\R)} \gtrsim \frac{\sqrt{n}}{2^n} \sum_{\delta\in\ms{C}_n} \Big\| \sum_{i=1}^n \delta_i  h_i\Big\|_{L_1(\ms{C}_n;\R)}.
\end{equation}
Suppose that a Banach space $X$ with $\mf{s}_p(X)<\infty$ is not $K$-convex, so that there exists a constant $K\in[1,\infty)$ such that for every $n\in\N$, there exists a linear operator $\msf{J}_n:L_1(\ms{C}_n;\R)\to X$ satisfying
 \begin{equation}
 \forall \ h\in L_1(\ms{C}_n;\R), \ \ \ \ \|h\|_{L_1(\ms{C}_n;\R)} \leq \|\msf{J}_nh\|_X \leq K\|h\|_{L_1(\ms{C}_n;\R)}.
 \end{equation}
 Consider the functions $H_1,\ldots,H_n:\ms{C}_n\to L_1(\ms{C}_n;\R)$ given by
 \begin{equation}
 \forall \ \e,\e'\in\ms{C}_n, \ \ \ \big[H_i(\e)\big](\e') = h_i(\e_1\e_1',\ldots,\e_n\e_n'),
 \end{equation}
where $h_i\in L_1(\ms{C}_n;\R)$ are the functions satisfying \eqref{eq:hidef}. Then, for every $i\in\{1,\ldots,n\}$, we have $[\ms{E}_iH_i(\e)](\e') = \ms{E}_ih_i(\e_1\e_1',\ldots,\e_n\e_n')$ and, by translation invariance, for every $\e,\delta\in\ms{C}_n$ we have
 \begin{equation*}
\Big\| \sum_{i=1}^n \delta_i \ms{E}_i H_i(\e)\Big\|_{L_1(\ms{C}_n;\R)} = \Big\| \sum_{i=1}^n \delta_i \ms{E}_i h_i\Big\|_{L_1(\ms{C}_n;\R)} \ \ \mbox{and} \ \ \Big\| \sum_{i=1}^n \delta_i H_i(\e)\Big\|_{L_1(\ms{C}_n;\R)} = \Big\| \sum_{i=1}^n \delta_i h_i\Big\|_{L_1(\ms{C}_n;\R)}
 \end{equation*}
 Therefore, considering the mappings $f_1,\ldots,f_n:\ms{C}_n\to X$ given by $f_i = \msf{J}_n \circ H_i$, we see that
 \begin{equation}
\Big(\frac{1}{2^n} \sum_{\delta\in\ms{C}_n} \Big\| \sum_{i=1}^n \delta_i\ms{E}_i f_i\Big\|_{L_p(\ms{C}_n;X)}^p\Big)^{1/p} \gtrsim K^{-1} \sqrt{n} \Big( \frac{1}{2^n} \sum_{\delta\in\ms{C}_n} \Big\| \sum_{i=1}^n \delta_if_i\Big\|_{L_p(\ms{C}_n;X)}^p\Big)^{1/p},
 \end{equation}
 thus showing that $\mf{s}_p(X)\gtrsim K^{-1} \sqrt{n}$, which is a contradiction.
 \end{proof}

Recall that the $X$-valued Rademacher projection is defined to be
\begin{equation}
\msf{Rad}\Big(  \sum_{A\subseteq\{1,\ldots,n\}} \widehat{f}(A) w_A\Big) \eqdef \sum_{i=1}^n \widehat{f}(\{i\}) w_{\{i\}}.
\end{equation}
A deep theorem of Pisier \cite{Pis82} asserts that a Banach space is $K$-convex if and only if
\begin{equation}
\forall \ r\in(1,\infty), \ \ \ \ \msf{K}_r(X) \eqdef \sup_{n\in\N}\big\|\msf{Rad}\big\|_{L_r(\ms{C}_n;X)\to L_r(\ms{C}_n;X)} <\infty.
\end{equation}
In particular, it follows from Lemma \ref{f2} that $\mf{s}_p(X)<\infty$ for some $p\in(1,\infty)$ implies that $\msf{K}_r(X)<\infty$ for every $r\in(1,\infty)$. We proceed by showing that Banach spaces belonging to the class considered in \cite[Theorem~1.4]{HN13} satisfy the assumptions of Theorem \ref{thm}.

\begin{proposition} \label{fact:hn1}
Let $(X,\|\cdot\|_X)$ be a Banach space. If $X^\ast$ is a UMD$^+$ space, then $X$ is a UMD$^-$ space and $\mf{s}_p(X)<\infty$ for every $p\in(1,\infty)$.
\end{proposition}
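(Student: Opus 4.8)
The plan is to establish the two assertions separately, both via duality, using the well-known correspondence between UMD$^+$ and UMD$^-$ for a space and its dual, together with Bourgain's inequality \eqref{eq:bou}.

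First I would record the duality principle for the one-sided UMD constants: for any Banach space $Y$ and any $p\in(1,\infty)$ one has $\beta_p^-(Y) = \beta_{p/(p-1)}^+(Y^\ast)$ (and symmetrically $\beta_p^+(Y)=\beta_{p/(p-1)}^-(Y^\ast)$). This is standard (it is implicit already in Garling's work \cite{Gar90} and stated explicitly in \cite{HN13}): it follows by pairing a martingale difference sequence in $Y$ against one in $Y^\ast$ adapted to the same filtration, expanding $\big\langle \sum_i \delta_i d_i, \sum_i \delta_i d_i^\ast\big\rangle$, averaging over $\delta\in\ms{C}_n$ to kill the off-diagonal terms (orthogonality of the Walsh characters $w_{\{i\}}$), and then invoking the reverse inequality for one factor and the forward inequality, controlling the dual martingale $\ms{M}_n^\ast$ by $\ms{M}_n$ via the construction $\ms{M}_i^\ast=\mb{E}[g|\ms{F}_i]$ for a norming functional $g$ of $\ms{M}_n$. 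Granting this, since $X^\ast$ is UMD$^+$, we get $\beta_p^-(X) = \beta_{p/(p-1)}^+(X^\ast) < \infty$ for every $p\in(1,\infty)$, so $X$ is UMD$^-$.

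Next I would handle $\mf{s}_p(X)<\infty$, again by duality applied to the square-function inequality \eqref{eq:bourgain}. Fix $p\in(1,\infty)$, put $q=p/(p-1)$, and take functions $f_1,\dots,f_n:\Omega\to X$ with conditional expectations $\ms{E}_i$ relative to a filtration $\{\ms{F}_i\}$. To bound $\big(2^{-n}\sum_\delta \|\sum_i \delta_i\ms{E}_i f_i\|_p^p\big)^{1/p}$ from above, I would choose for each $\delta$ a norming functional and, more efficiently, realize the whole left-hand side as a supremum over $X^\ast$-valued test functions: pick $g_1,\dots,g_n:\ms{C}_n^{(\delta)}\times\Omega\to X^\ast$ (or more cleanly, work on the product of $\ms{C}_n$ with $\Omega$ and use that $L_q(\ms{C}_n\times\Omega;X^\ast)$ is the dual of $L_p(\ms{C}_n\times\Omega;X)$). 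The key algebraic identity is the self-adjointness of each conditional expectation: $\int \langle \ms{E}_i f_i, h\rangle = \int \langle f_i, \ms{E}_i h\rangle$. Using this together with orthogonality of the $\delta_i$ on $\ms{C}_n$, a pairing $\frac{1}{2^n}\sum_\delta \langle \sum_i \delta_i \ms{E}_i f_i, \sum_i \delta_i h_i\rangle$ collapses to $\frac{1}{2^n}\sum_\delta\langle \sum_i \delta_i f_i, \sum_i\delta_i\ms{E}_i h_i\rangle$, at which point Hölder and \eqref{eq:bou} applied in $X^\ast$ (which is UMD$^+$, so $\mf{s}_q(X^\ast)\le\beta_q^+(X^\ast)<\infty$) give the bound $\mf{s}_q(X^\ast)\big(2^{-n}\sum_\delta\|\sum_i\delta_i f_i\|_p^p\big)^{1/p}$. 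Taking the supremum over test functions $(h_i)$ normalized in the dual square-function norm yields $\mf{s}_p(X)\le \mf{s}_q(X^\ast)\le\beta_q^+(X^\ast)$.

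The main obstacle is bookkeeping rather than conceptual: one must be careful that the dual of the left-hand side of \eqref{eq:bourgain} is exactly the right-hand side quantity for $X^\ast$, i.e. that the bilinear form $(f,h)\mapsto \frac{1}{2^n}\sum_\delta \langle \sum_i\delta_i f_i,\sum_i\delta_i h_i\rangle$ induces, after the conditional-expectation transpose, a genuine duality between the relevant subspaces — this requires $K$-convexity of $X$ so that the Rademacher-type norms behave well under duality, and here $K$-convexity is available because $X^\ast$ UMD$^+$ implies $X^\ast$ is $K$-convex, hence so is $X$ (alternatively it will follow a posteriori from Lemma \ref{f2} once $\mf{s}_p(X)<\infty$ is proved, but to avoid circularity I would invoke it directly). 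I would therefore open the proof by noting this $K$-convexity, then carry out the two duality arguments above.
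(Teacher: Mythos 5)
Your proposal follows essentially the same route as the paper: the UMD$^-$ part is Garling's duality theorem (which the paper simply cites), and the bound on $\mf{s}_p(X)$ is obtained exactly as in the paper's proof, by pairing against an $X^\ast$-valued norming function, using $K$-convexity of $X^\ast$ to control its Rademacher projection, moving the conditional expectations onto the dual side by self-adjointness and orthogonality of the signs, and then applying H\"older together with Bourgain's inequality $\mf{s}_q(X^\ast)\leq\beta_q^+(X^\ast)$. The only slip is in the final constant: since the norming functional need not itself be a degree-one Rademacher sum, the duality step costs the $K$-convexity constant, so the argument yields $\mf{s}_p(X)\leq \msf{K}_q(X^\ast)\,\mf{s}_q(X^\ast)$ (as in the paper) rather than $\mf{s}_p(X)\leq\mf{s}_q(X^\ast)$ — immaterial for the finiteness assertion of the proposition.
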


\begin{proof}
The fact that if $X^\ast$ is UMD$^+$, then $X$ is UMD$^-$ has been proven by Garling in \cite[Theorem~1]{Gar90}, so we only have to prove that $\mf{s}_p(X)<\infty$. Let $f_1,\ldots,f_n:\ms{C}_n\to X$ and $G^\ast:\ms{C}_n\times\ms{C}_n\to X^\ast$ be such that
\begin{equation} \label{is normal}
\Big(\frac{1}{2^n} \sum_{\delta\in\ms{C}_n} \Big\| \sum_{i=1}^n \delta_i \ms{E}_i f_i\Big\|_{L_p(\ms{C}_n;X)}^p\Big)^{1/p} = \frac{1}{4^n} \sum_{\e,\delta\in\ms{C}_n} \big\langle G^\ast(\e,\delta),\sum_{i=1}^n\delta_i \ms{E}_if_i(\e)\big\rangle
\end{equation}
and $\|G^\ast\|_{L_q(\ms{C}_n\times\ms{C}_n;X^\ast)} = 1$, where $\tfrac{1}{p}+\tfrac{1}{q}=1$. Let $G_i^\ast:\ms{C}_n\to X^\ast$ be given by
\begin{equation} \label{it is rad}
\forall \ \e\in\ms{C}_n, \ \ \ \ G_i^\ast(\e) = \frac{1}{2^n}\sum_{\delta\in\ms{C}_n} \delta_i G_i^\ast(\e,\delta).
\end{equation}
Then, since $X^\ast$ is UMD$^+$, we deduce that $X^\ast$ is also $K$-convex (this is proven in \cite{Gar90} but it also follows by combining Bourgain's inequality \eqref{eq:bourgain} with Lemma \ref{f2}) and thus
\begin{equation} \label{eq:is k con}
\Big(\frac{1}{2^n} \sum_{\delta\in\ms{C}_n} \Big\| \sum_{i=1}^n \delta_i  G^\ast_i\Big\|_{L_q(\ms{C}_n;X^\ast)}^q\Big)^{1/q} \stackrel{\eqref{it is rad}}{=} \Big(\frac{1}{4^n}\sum_{\e,\delta\in\ms{C}_n} \big\| \msf{Rad}_\delta G^\ast(\e,\delta)\big\|_X^q\Big)^{1/q} \leq \msf{K}_q(X^\ast).
\end{equation}
Hence, we have
\begin{equation} \label{eq:identities}
\begin{split}
&\Big(\frac{1}{2^n} \sum_{\delta\in\ms{C}_n}  \Big\| \sum_{i=1}^n \delta_i \ms{E}_i f_i\Big\|_{L_p(\ms{C}_n;X)}^p\Big)^{1/p} \stackrel{\eqref{is normal}\wedge\eqref{it is rad}}{=} \frac{1}{4^n} \sum_{\e,\delta\in\ms{C}_n} \big\langle \sum_{i=1}^n\delta_i G_i^\ast(\e),\sum_{i=1}^n\delta_i \ms{E}_if_i(\e)\big\rangle 
\\ & = \frac{1}{2^n}\sum_{\e\in\ms{C}_n} \langle G_i^\ast(\e), \ms{E}_if_i(\e)\rangle = \frac{1}{2^n}\sum_{\e\in\ms{C}_n} \langle \ms{E}_iG_i^\ast(\e), f_i(\e)\rangle = \frac{1}{4^n} \sum_{\e,\delta\in\ms{C}_n} \big\langle \sum_{i=1}^n\delta_i \ms{E}_iG_i^\ast(\e),\sum_{i=1}^n\delta_i f_i(\e)\big\rangle 
\\ & \leq \Big(\frac{1}{2^n} \sum_{\delta\in\ms{C}_n} \Big\| \sum_{i=1}^n \delta_i \ms{E}_i G^\ast_i\Big\|_{L_q(\ms{C}_n;X^\ast)}^q\Big)^{1/q} \cdot \Big(\frac{1}{2^n} \sum_{\delta\in\ms{C}_n} \Big\| \sum_{i=1}^n \delta_i f_i\Big\|_{L_p(\ms{C}_n;X)}^p\Big)^{1/p}.
\end{split}
\end{equation}
Therefore, combining \eqref{eq:identities} with \eqref{eq:bourgain} and \eqref{eq:is k con}, we deduce that
\begin{equation}
\begin{split}
\Big(\frac{1}{2^n} \sum_{\delta\in\ms{C}_n} & \Big\| \sum_{i=1}^n \delta_i \ms{E}_i f_i\Big\|_{L_p(\ms{C}_n;X)}^p\Big)^{1/p}
\\ & \stackrel{\eqref{eq:bourgain}}{\leq} \mf{s}_q(X^\ast)  \Big(\frac{1}{2^n} \sum_{\delta\in\ms{C}_n} \Big\| \sum_{i=1}^n \delta_i  G^\ast_i\Big\|_{L_q(\ms{C}_n;X^\ast)}^q\Big)^{1/q} \cdot \Big(\frac{1}{2^n} \sum_{\delta\in\ms{C}_n} \Big\| \sum_{i=1}^n \delta_i f_i\Big\|_{L_p(\ms{C}_n;X)}^p\Big)^{1/p}
\\ & \stackrel{\eqref{eq:is k con}}{\leq}  \mf{s}_q(X^\ast)  \msf{K}_q(X^\ast) \cdot \Big(\frac{1}{2^n} \sum_{\delta\in\ms{C}_n} \Big\| \sum_{i=1}^n \delta_i f_i\Big\|_{L_p(\ms{C}_n;X)}^p\Big)^{1/p},
\end{split}
\end{equation}
which shows that $\mf{s}_p(X) \leq \msf{K}_q(X^\ast) \mf{s}_q(X^\ast)$.
\end{proof}

We conclude by observing that spaces satisfying the assumptions of Theorem \ref{thm} are necessarily superreflexive (see \cite{Pis16} for the relevant terminology).

\begin{lemma}
If a UMD$^-$ Banach space $(X,\|\cdot\|_X)$ satisfies $\mf{s}_p(X)<\infty$, then $X$ is superreflexive.
\end{lemma}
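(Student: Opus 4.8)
The plan is to deduce from the hypotheses that $X$ has martingale type $p$ for some $p>1$ and then to invoke Pisier's classical renorming theorem.

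First, the hypothesis $\mf{s}_p(X)<\infty$ forces $X$ to be $K$-convex by Lemma~\ref{f2}; hence, by Pisier's theorem \cite{Pis82} recalled in Section~\ref{sec:3}, $X$ has nontrivial Rademacher type. So I can fix $p_0\in(1,2]$ and $T\in(0,\infty)$ such that every finite sequence $x_1,\ldots,x_m\in X$ satisfies
\begin{equation*}
\Big( \frac{1}{2^m}\sum_{\delta\in\ms{C}_m} \Big\| \sum_{j=1}^m \delta_j x_j\Big\|_X^{p_0} \Big)^{1/p_0} \leq T\Big( \sum_{j=1}^m \|x_j\|_X^{p_0}\Big)^{1/p_0},
\end{equation*}
where I am using the $L_{p_0}$-average of the signs on the left, which is legitimate because $p_0\leq 2$.

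Next I would show that $X$ has martingale type $p_0$. Given a probability space $(\Omega,\ms{F},\mu)$, a filtration $\{\ms{F}_i\}_{i=0}^n$ and an adapted martingale $\{\ms{M}_i:\Omega\to X\}_{i=0}^n$ with differences $d_i=\ms{M}_i-\ms{M}_{i-1}$, I would apply the type inequality above pointwise in $\omega\in\Omega$ with $x_j=d_j(\omega)$ and integrate in $\omega$ to get
\begin{equation*}
\frac{1}{2^n}\sum_{\delta\in\ms{C}_n} \Big\| \sum_{i=1}^n \delta_i d_i\Big\|_{L_{p_0}(\Omega,\mu;X)}^{p_0} \leq T^{p_0} \sum_{i=1}^n \|d_i\|_{L_{p_0}(\Omega,\mu;X)}^{p_0},
\end{equation*}
and then combine this with the UMD$^-$ inequality \eqref{eq:umd-} at the exponent $p_0$ to obtain
\begin{equation*}
\|\ms{M}_n-\ms{M}_0\|_{L_{p_0}(\Omega,\mu;X)} \leq \beta_{p_0}^-(X)\, T\Big( \sum_{i=1}^n \|d_i\|_{L_{p_0}(\Omega,\mu;X)}^{p_0}\Big)^{1/p_0},
\end{equation*}
which is exactly the statement that $X$ has martingale type $p_0$, with constant at most $\beta_{p_0}^-(X)\,T$.

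Finally, since $p_0>1$, a Banach space of martingale type $p_0$ is uniformly convexifiable and hence superreflexive (see \cite{Pis16}), which finishes the proof. The calculation itself is routine; the real content is supplied by the two cited theorems of Pisier ($K$-convexity is equivalent to nontrivial Rademacher type, and martingale type strictly greater than $1$ characterizes superreflexivity), so if anything deserves to be called the main obstacle, it is recognizing that these classical results are exactly what is needed. The one subtlety to flag is that the UMD$^-$ inequality is applied at the exponent $p_0$ determined by the type of $X$ rather than at the original exponent $p$ from the statement of Theorem~\ref{thm}; this is permitted since, by definition, the UMD$^-$ property holds at every exponent in $(1,\infty)$.
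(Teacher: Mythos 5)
Your proposal is correct and follows essentially the same route as the paper: Lemma~\ref{f2} gives $K$-convexity, Pisier's theorem (the paper cites \cite{Pis73} for the equivalence with nontrivial Rademacher type) yields type $p_0\in(1,2]$, and combining the type inequality with the UMD$^-$ inequality at the exponent $p_0$ gives martingale type $p_0$, whence superreflexivity. The only cosmetic difference is the citation: the relevant statement is the one from \cite{Pis73} (nontrivial type) rather than the Rademacher-projection formulation of \cite{Pis82}.
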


\begin{proof}
A theorem of Pisier \cite{Pis73} asserts that a Banach space $X$ is $K$-convex if and only if $X$ has nontrivial Rademacher type. Therefore, we deduce from Lemma \ref{f2} that if $\mf{s}_p(X)<\infty$ for some $p\in(1,\infty)$, then there exists $s\in(1,2]$ and $T_s(X)\in(0,\infty)$ such that
\begin{equation} \label{eq:radtype}
\forall \ x_1,\ldots,x_n\in X, \ \ \ \ \Big(\frac{1}{2^n} \sum_{\delta\in\ms{C}_n}\Big\|\sum_{i=1}^n \delta_i x_i\Big\|_X^s\Big)^{1/s} \leq T_s(X) \Big(\sum_{i=1}^n \|x_i\|_X^s\Big)^{1/s}.
\end{equation}
Therefore, if $X$ also satisfies the UMD$^-$ property, we deduce that for every $X$-valued martingale $\{\ms{M}_i:\Omega\to X\}_{i=0}^n$,
\begin{equation}
\begin{split}
\|\ms{M}_n - \ms{M}_0\|_{L_s(\Omega,\mu;X)} & \leq \beta_s^-(X) \Big(\frac{1}{2^n} \sum_{\delta\in\ms{C}_n} \Big\| \sum_{i=1}^n \delta_i (\ms{M}_i-\ms{M}_{i-1}) \Big\|^s_{L_s(\Omega,\mu;X)}\Big)^{1/s}
\\ & \stackrel{\eqref{eq:radtype}}{\leq} \beta_s^-(X)T_s(X) \Big( \sum_{i=1}^n \|\ms{M}_i-\ms{M}_{i-1}\|^s_{L_s(\Omega,\mu;X)}\Big)^{1/s},
\end{split}
\end{equation}
which means that $X$ has martingale type $s$. Combining this with well known results linking martingale type and superreflexivity (see \cite{Pis16}), we reach the desired conclusion.
\end{proof}

Therefore, Theorem \ref{thm} establishes that $\mf{P}_p^n(X)=\Theta(1)$ for $X$ in a (strict, see \cite{Gar90, Qiu12}) subclass of all superreflexive spaces. In the forthcoming manuscript \cite{EN20}, the bound $\mf{P}_p^n(X) = o(\log n)$ is shown to hold for every superreflexive Banach space $X$ and $p\in(1,\infty)$.

\bibliography{pisier_umd}
\bibliographystyle{alpha}
%\nocite{*}

\end{document}